\def\tagform@#1{\maketag@@@{\bfseries(\ignorespaces#1\unskip\@@italiccorr)}}
\renewcommand{\eqref}[1]{\textup{{\normalfont(\ref{#1}}\normalfont)}}
\theoremstyle{plain}
\newtheorem{lemma}[equation]{Lemma}
\newtheorem{prop}[equation]{Proposition}
\theoremstyle{definition}
\theoremstyle{definition}
\def\cF{\mathcal{F}}
\def\cG{\mathcal{G}}
\def\cL{\mathcal{L}}
\def\11{\mathbf{1}}
\def\CC{\mathbf{C}}
\newcommand{\mapright}[1]{\xrightarrow{#1}}
\title{A Remark on Ehresmann's Fibration Theorem}
\author{R. Virk}
\begin{document}
\maketitle
%\setcounter{tocdepth}{1}
%\tableofcontents
%\renewcommand{\thesubsection}{\textbf{\arabic{section}.\arabic{subsection}}}
\renewcommand{\thesubsection}{\arabic{subsection}}
If $f\colon Z \to Y$ is a smooth proper morphism of smooth varieties, and $\cL$ a local system on $Z$, then the sheaves $R^qf_*\cL$ are local systems on $Y$. This is typically seen as a consequence of Ehresmann's Theorem - $f$ is a topological fiber bundle over each component of $Y$ (\cite[Theorem 9.3]{Vo} is a convenient reference).
This note records that the cohomological consequence holds without the smooth assumption on $Y$ or $Z$.

\subsection*{Conventions}
A `sheaf' means a `sheaf of vector spaces over some fixed field', and `variety' = `separated reduced scheme of finite type over $\mathrm{Spec}(\CC)$'. Sheaves on varieties are with respect to the complex analytic site. A proper map of topological spaces is a separated and universally closed map.

J-L.\ Verdier asserts the following without the locally connected hypothesis \cite[Lemme 2.2.2]{V}. I was unable to understand his proof without this assumption.
\begin{lemma}Let $p\colon X \to Y$ be a proper surjective map of topological spaces. Assume $X$ is locally connected. Let $\cF$ be a sheaf on $Y$ with finite dimensional stalks. If $p^*\cF$ is a local system, then so is $\cF$.
\end{lemma}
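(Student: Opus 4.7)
Fix $y_0 \in Y$ and set $V := \cF_{y_0}$. The plan is to produce an open neighborhood $U$ of $y_0$ together with a morphism $\alpha\colon \underline{V}_U \to \cF|_U$ that is an isomorphism on each stalk. The key preliminary observation is that $(p^*\cF)|_{p^{-1}(y_0)}$ is canonically the constant sheaf $\underline{V}$. Indeed, for every $x \in p^{-1}(y_0)$ the stalk $(p^*\cF)_x$ is canonically $V$; and because the fibers of the \'etal\'e space of $\cF$ are discrete, parallel transport of the local system $p^*\cF$ along any path contained in $p^{-1}(y_0)$ lifts to a continuous path in the discrete set $V$ and hence is the identity. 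Thus $p^*\cF$ has trivial monodromy along $p^{-1}(y_0)$, and the canonical identifications of its stalks with $V$ glue into an isomorphism $(p^*\cF)|_{p^{-1}(y_0)} \cong \underline{V}$.

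Using local constancy of $p^*\cF$, local connectedness of $X$, and quasi-compactness of $p^{-1}(y_0)$ (a consequence of properness), I would cover $p^{-1}(y_0)$ by finitely many connected open $W_1, \dots, W_n \subset X$ on which $p^*\cF$ is trivial, with trivializations $\phi_i\colon p^*\cF|_{W_i} \xrightarrow{\sim} \underline{V}$ normalized to equal the canonical one at chosen base points $x_i \in W_i \cap p^{-1}(y_0)$. Since $p$ is closed, $U := Y \setminus p(X \setminus \bigcup_i W_i)$ is an open neighborhood of $y_0$ with $p^{-1}(U) \subset \bigcup_i W_i$. Under each $\phi_i$, every $v \in V$ gives a section $\tilde s_v^{(i)} \in (p^*\cF)(W_i)$, and the heart of the argument is to show that, after possibly shrinking $U$, these agree on overlaps and assemble into a section $\tilde s_v \in (p^*\cF)(p^{-1}(U))$ which, moreover, is constant on every fiber $p^{-1}(y)$ for $y \in U$ (viewed as a section of $(p^*\cF)|_{p^{-1}(y)} = \underline{\cF_y}$, by the preliminary observation applied at $y$). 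Granted this, $\tilde s_v$ descends along $p$ to a section $s_v \in \cF(U)$, and $v \mapsto s_v$ assembles into $\alpha$. A rank count --- $\dim \cF_y = \dim V$ for every $y \in U$, because each connected component of $X$ meeting $p^{-1}(U)$ contains some $W_i$ and hence meets $p^{-1}(y_0)$, so the local system $p^*\cF$ has rank $\dim V$ on that component --- then shows $\alpha$ is a stalkwise isomorphism.

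The main obstacle is the descent step. The transition cocycles $\phi_j \phi_i^{-1}\colon W_i \cap W_j \to GL(V)$ are locally constant and equal the identity at each base point $x_i$ by our normalization, but may fail to be the identity on connected components of $W_i \cap W_j$ containing no base point. This is where the hypotheses are used most delicately: local connectedness of $X$ governs the component structure of the overlaps, and the closed-image property of $p$ lets one shrink $U$ further so that $p^{-1}(U)$ meets only those components of overlaps that also meet $p^{-1}(y_0)$, on which the normalization forces the cocycle to be trivial. I expect making this shrinking precise, and verifying fiberwise constancy of the $\tilde s_v$ for every $y$ in the final $U$, to occupy the bulk of the write-up.
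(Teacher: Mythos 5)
Your strategy --- trivialize $p^*\cF$ on a finite cover of the fiber, glue the trivializations, and descend along $p$ --- is not the paper's route, and as sketched it has genuine gaps at exactly the step you flag as the heart of the argument. First, your normalization pins each $\phi_i$ to the canonical identification only at the single point $x_i$. On a connected component of $W_i\cap W_j$ meeting $p^{-1}(y_0)$ at a point $x$, the cocycle equals $(\phi_j)_x(\phi_i)_x^{-1}$; since $W_i\cap p^{-1}(y_0)$ need not be connected, there is no reason $(\phi_i)_x$ or $(\phi_j)_x$ agrees with the canonical identification at $x$, so even the ``good'' components (indeed, even the components containing a base point $x_i$, since $\phi_j$ is normalized only at $x_j$) can carry a nontrivial cocycle. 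Second, the proposed shrinking does not obviously work: the union $A_{ij}$ of the ``bad'' components of $W_i\cap W_j$ is open (by local connectedness) but generally not closed in $X$, so closedness of $p$ does not make $p(A_{ij})$ closed; to excise it from $U$ you would need $y_0\notin\overline{p(A_{ij})}\subseteq p(\overline{A_{ij}})$, and $\overline{A_{ij}}$ can meet $p^{-1}(y_0)$ at boundary points of $W_i\cap W_j$. Finally, the descent of the glued section along $p$ (the unit $\cF\to p_*p^*\cF$ need not be an isomorphism when fibers are disconnected) and the fiberwise constancy over nearby $y\neq y_0$ are asserted but not argued.

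All of this labor is avoidable, because the identification $(p^*\cF)_x\cong\cF_{p(x)}$ is canonical for \emph{every} $x\in X$, not just for $x$ over $y_0$. The paper builds the comparison map downstairs: each $v\in V=\cF_{y_0}$ is by definition the germ of a section of $\cF$ over some neighborhood of $y_0$, so finite-dimensionality yields a map $u\colon\cG\to\cF$ from a constant sheaf $\cG$ which is an isomorphism on the stalk at $y_0$. Then $p^*u$ is a morphism of local systems that is an isomorphism on stalks at every point of $p^{-1}(y_0)$; on a locally connected space the locus where a morphism of local systems is an isomorphism on stalks is open; closedness of $p$ produces an open $U\ni y_0$ with $p^{-1}(U)$ contained in that locus; and surjectivity together with $(p^*u)_x=u_{p(x)}$ transfers the stalkwise isomorphism back down to $U$. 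You should either adopt this argument or supply complete proofs of the gluing, shrinking, descent, and fiberwise-constancy claims; as written the proposal does not yet constitute a proof.
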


\begin{proof}
	Let $y\in Y$. 
	The stalk $\cF_y$ is finite dimensional, so there exist sections $s_1, \ldots, s_n$, of $\cF$ over some open neighborhood of $y$, which restrict to a basis of $\cF_y$. Since our problem is local, we may assume this neighborhood is all of $Y$. Let $\cG$ be the constant sheaf on $Y$ with stalk $\mathrm{span}\{s_1,\ldots,s_n\}$. Then the evident map
$u\colon \cG \to \cF$ induces an isomorphism $\cG_y \mapright{\sim}\cF_y$.
	Consequently, $p^*u$ induces isomorphisms:
	\[ (p^*\cG)_x \mapright{\sim} (p^*\cF)_x \quad \mbox{for all $x\in p^{-1}(y)$}.\]
	For a locally connected space, the set of points at which a morphism of local systems induces an isomorphism on stalks defines an open set.
	Hence, the set $V\subset X$ of points at which $p^*u$ induces isomorphisms on stalks is open.
	As $p$ is proper, $U = Y - f(X-V)$ is an open neighborhood of $y$. As $p$ is surjective, $u$ yields an isomorphism $\cG|_U \mapright{\sim} \cF|_U$.
\end{proof}

\begin{prop}
	Let $f\colon Z \to Y$ be a smooth and proper morphism of varieties. Let $\cL$ be a local system on $Z$ with finite dimensional stalks. Then the sheaves $R^qf_*\cL$ are local systems.
\end{prop}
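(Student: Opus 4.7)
The plan is to reduce to the classical smooth case by pulling back $f$ along a resolution of singularities of $Y$ and then descending via the preceding lemma. Concretely, choose $\pi\colon \widetilde{Y} \to Y$ a resolution of singularities (Hironaka), so $\pi$ is proper and surjective with $\widetilde{Y}$ smooth. Form the Cartesian square with $\widetilde{Z} := Z \times_Y \widetilde{Y}$, giving $\widetilde{f}\colon \widetilde{Z} \to \widetilde{Y}$ and $\widetilde{\pi}\colon \widetilde{Z} \to Z$. Smoothness and properness are stable under base change, and a morphism smooth over a smooth base has smooth source, so $\widetilde{f}$ is smooth and proper and $\widetilde{Z}$ is smooth. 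Applying the classical statement recalled in the opening paragraph to $\widetilde{f}$ then yields that $R^q\widetilde{f}_*(\widetilde{\pi}^*\cL)$ is a local system on $\widetilde{Y}$. Proper base change (valid because $f$ is proper) identifies this with $\pi^*\cF$, where $\cF := R^q f_*\cL$; hence $\pi^*\cF$ is a local system on $\widetilde{Y}$.

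Next I would verify the hypotheses of the preceding lemma for the map $\pi$ and the sheaf $\cF$: that $\pi$ is proper surjective, that $\widetilde{Y}$ is locally connected, that $\pi^*\cF$ is a local system, and that $\cF$ has finite-dimensional stalks. The first is built into the resolution; the second holds because $\widetilde{Y}$ is a complex manifold; the third was just shown. For the last, proper base change gives $\cF_y \cong H^q(f^{-1}(y), \cL|_{f^{-1}(y)})$, and since $f$ is smooth and proper each fiber is a compact complex manifold whose cohomology with coefficients in a finite-rank local system is finite-dimensional. The lemma then delivers that $\cF$ is a local system, as required.

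The main obstacle is producing a proper surjective cover of $Y$ by a locally connected space over which the analogous local-system statement is already established. Resolution of singularities furnishes such a cover, with the added feature that the base-changed total space $\widetilde{Z}$ is again smooth, so that the classical Ehresmann theorem applies to $\widetilde{f}$. After that, the argument is essentially bookkeeping with proper base change.
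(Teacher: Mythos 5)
Your argument is correct and is exactly the route the paper takes: its one-sentence proof (resolution of singularities, the Lemma, and proper base change reduce to the case where $Z$ and $Y$ are smooth, where Ehresmann applies) unwinds to precisely your Cartesian square, with $\widetilde{Z}$ smooth because it is smooth over the smooth $\widetilde{Y}$. The only cosmetic difference is the reference cited for resolution of singularities, so there is nothing to add.
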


\begin{proof}
	Resolution of singularities (the version in \cite{BP} suffices), the Lemma and proper base change reduce us to the situation where $Z$ and $Y$ are smooth. Here the usual form of Ehresmann's Theorem applies.
\end{proof}	
%there exists a proper surjective morphism $p\colon X\to Y$ with $X$ smooth. In view of the Lemma, it suffices to show $p^*R^qf_*\cL$ is a local system. Proper base change applied to the cartesian square
%\[ \xymatrix{ X\times_Y Z \ar[r]^{\bar p} \ar[d]_{\bar f} & Z\ar[d]_f \\
%X \ar[r]^p & Y } \]
%yields an isomorphism:
%\[ p^*R^qf_*\cL \simeq R^q\bar f_* \bar p^*\cL.\]
%As $\bar f$ is a smooth proper morphism between smooth varieties, we are done by the usual form of Ehresmann's Fibration Theorem.
%\end{proof}
%\begin{remark}
%	The fibers of $f$ over any fixed connected component of $Y$ are homeomorphic topological spaces. Moreover, they have the same Hodge numbers.
%\end{remark}

\end{document}